\newtheorem{theorem}{Theorem}[section]
\newtheorem{prop}[theorem]{Proposition}
\theoremstyle{definition}
\theoremstyle{remark}
\newtheorem{remark}[theorem]{Remark}
\numberwithin{equation}{section}
\let \la=\lambda
\let \e=\varepsilon
\let \o=\omega
\let \a=\alpha
\let \f=\varphi
\let \O=\Omega
\let \si=\sigma
\begin{document}
\title[Weak Muckenhoupt-Wheeden conjecture: sharp bound]
{On the sharp upper bound related to the weak Muckenhoupt-Wheeden conjecture}

\author{Andrei K. Lerner}
\address{Department of Mathematics,
Bar-Ilan University, 5290002 Ramat Gan, Israel}
\email{lernera@math.biu.ac.il}

\author{Fedor Nazarov}
\address{Department of Mathematics, Kent State University, OH 44240, USA}
\email{nazarov@math.kent.edu}

\author{Sheldy Ombrosi}
\address{Departamento de Matem\'atica\\
Universidad Nacional del Sur\\
Bah\'ia Blanca, 8000, Argentina}\email{sombrosi@uns.edu.ar}

\thanks{A. Lerner is supported by ISF grant No. 447/16 and ERC Starting Grant No. 713927, F. Nazarov is
supported by U.S.~National Science Foundation grant DMS-1600239, S. Ombrosi is supported by CONICET PIP 11220130100329CO, Argentina.}

\begin{abstract}
We construct an example showing that the upper bound $[w]_{A_1}\log({\rm{e}}+[w]_{A_1})$ for the $L^1(w)\to L^{1,\infty}(w)$ norm of the Hilbert transform cannot be improved in general.
\end{abstract}

\keywords{Hilbert transform, maximal operator, weighted inequalities.}
\subjclass[2010]{42B20, 42B25}

\maketitle

\section{Introduction}
Define the Hardy-Littlewood maximal operator on ${\mathbb R}$ by
$$Mf(x)=\sup_{I\ni x}\frac{1}{|I|}\int_I|f(y)|dy,$$
where the supremum is taken over all intervals $I\subset {\mathbb R}$ containing the point $x$.

In \cite{FS}, C. Fefferman and E. Stein established the following weighted weak type inequality for $M$: there exists an absolute constant $C>0$ such that for every weight $w$,
\begin{equation}\label{fef-st}
\sup_{\a>0}\a w\{x\in {\mathbb R}:Mf(x)>\a\}\le C\int_{\mathbb R}|f(x)|Mw(x)dx
\end{equation}
(here by a weight we mean any non-negative locally integrable function on ${\mathbb R}$, and we use the standard notation $w(E)=\int_Ew$ for a measurable set $E\subset {\mathbb R}$).

Inequality (\ref{fef-st}) is important for several reasons. First, it is the key ingredient in extending the Hardy-Littlewood maximal theorem to a vector-valued case. Second, this result was a
precursor of the weighted theory, which had started to develop rapidly from the beginning of the 70's. In particular, if we define the $[w]_{A_1}$ constant of the weight $w$ by
$[w]_{A_1}=\|Mw/w\|_{L^{\infty}}$, then, assuming $[w]_{A_1}<\infty$, (\ref{fef-st}) implies immediately that
\begin{equation}\label{A1}
\|Mf\|_{L^{1,\infty}(w)}\le C[w]_{A_1}\|f\|_{L^1(w)}.
\end{equation}

Consider now the Hilbert transform,
$$Hf(x)=\text{P.V.}\int_{{\mathbb R}}\frac{f(y)}{x-y}dy.$$

The inequality (\ref{fef-st}) with the maximal operator on the left-hand side replaced by the Hilbert transform
has become known as the Muckenhoupt-Wheeden conjecture. Only recently this conjecture has been disproved by M. Reguera and C. Thiele \cite{RT}
(see also \cite{CLO,CS,R} for some complements and extensions). Their result, however, left open the question whether a weaker form of the
Muckenhoupt-Wheeden conjecture holds, with $M$ replaced by $H$ on the left-hand side of (\ref{A1}).

In \cite{LOP2}, it was proved that
\begin{equation}\label{lop}
\|Hf\|_{L^{1,\infty}(w)}\le C[w]_{A_1}\log({\rm{e}}+[w]_{A_1})\|f\|_{L^1(w)}.
\end{equation}
This improved a previous result in \cite{LOP1}, where the right-hand side contained an additional factor double logarithmic in $[w]_{A_1}$. Notice also that actually (\ref{lop}) in \cite{LOP2} was proved for every
Calder\'on-Zygmund operator on ${\mathbb R}^n$ with sufficiently smooth kernel.

On the other hand, in \cite{NRVV1}, it was shown for the martingale transform (and explained how to transfer the result to the Hilbert transform case) that
the dependence of $[w]_{A_1}$ in the weighted weak type $(1,1)$ inequality cannot in general be made better than $[w]_{A_1}\log^{1/5}({\rm{e}}+[w]_{A_1})$, thus disproving the weak
Muckenhoupt-Wheeden conjecture. Later, in \cite{NRVV2}, the power of the logarithm was improved to $1/3$ (this was again done for the martingale transform).

Summarizing the results in \cite{LOP2,NRVV1,NRVV2}, if we denote by $\a_{H}$ the best possible exponent for which the inequality
$$
\|Hf\|_{L^{1,\infty}(w)}\le C[w]_{A_1}\log^{\a_H}({\rm{e}}+[w]_{A_1})\|f\|_{L^1(w)}
$$
holds, then we have that $\frac{1}{3}\le \a_{H}\le 1$.

The main result of this paper shows, in particular, that $\a_{H}=1$. For $t\ge 1$, define
$$\f_H(t)=\sup_{[w]_{A_1}\le t}\|H\|_{L^1(w)\to L^{1,\infty}(w)}.$$
Then (\ref{lop}) implies $\f_H(t)\le Ct\log({\rm{e}}+t)$. We will show that actually $\f_{H}(t)\eqsim t\log({\rm{e}}+t)$. Our main result is the following.

\begin{theorem}\label{main}
There exists $c'>0$ such that for all $t\ge 1$,
$$\f_H(t)\ge c't\log({\rm{e}}+t).$$
\end{theorem}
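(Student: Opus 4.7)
The plan is to construct, for each large $t$, a weight $w=w_t$ with $[w]_{A_1}\lesssim t$ and a non-negative function $f=f_t$ satisfying $\|Hf\|_{L^{1,\infty}(w)}\gtrsim t\log(e+t)\|f\|_{L^1(w)}$. Since the claim is trivial for bounded $t$, I would work in the regime $t\to\infty$ and set $N\sim\log t$.

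The first step is to reduce to a dyadic setting. Using the standard transference between the Hilbert transform and dyadic Haar shifts (or martingale transforms) on randomised dyadic grids, as exploited in the NRVV papers, it is enough to produce the example for a model operator; the statement for $H$ itself then follows by averaging over grids.

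Next, I would build $w$ as a nested superposition of $N$ ``bumps'' at geometrically decreasing scales $\varepsilon,\varepsilon^2,\ldots,\varepsilon^N$, each of height $\sim t$ and arranged hierarchically inside one another. The hierarchical, \emph{staggered} placement is crucial: each bump must be positioned so that no interval encounters more than $O(1)$ bumps of comparable scale, keeping $[w_t]_{A_1}\lesssim t$ despite the presence of $N$ of them. I would then take $f_t=\sum_k c_k\chi_{J_k}$, where $J_k$ is located near the $k$-th bump and $c_k$ is tuned so that each summand has equal $L^1(w)$-mass. The staggered layout should ensure that, at a common ``observation region'' downstream of all the $J_k$, the contributions $H(c_k\chi_{J_k})$ share the same sign and add coherently, so that $|Hf|\gtrsim\log t\cdot\|f\|_{L^1(w)}/w(\text{obs.\ region})$; combined with the fact that the observation region has $w$-mass $\sim t$ times its Lebesgue measure (because it sits inside a bump), this yields the desired $t\log t$ factor after optimising the weak-type level $\alpha$.

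The main obstacle lies in the combinatorial design of the weight and function. Naïve random placement of bumps gives only the NRVV lower bound $t\log^{1/3}(e+t)$, because $N$ random signs produce on the order of $N^{1/3}$ worth of coherent contribution. Upgrading to a deterministic, hierarchical construction that aligns all $N$ contributions of $Hf$ in sign, while simultaneously preserving $[w_t]_{A_1}\lesssim t$, is the heart of the proof; once that is done, the extraction of the weak-type lower bound should be a direct calculation.
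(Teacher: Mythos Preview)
Your proposal is a plan, not a proof: you concede that ``the combinatorial design of the weight and function\ldots is the heart of the proof,'' and that heart is exactly what you do not supply. The NRVV papers already attempted direct $A_1$ constructions in the weak $(1,1)$ setting and reached only $\log^{1/5}$ (later $\log^{1/3}$); asserting that a deterministic hierarchical placement will align all $N\sim\log t$ contributions of $Hf$ in sign while keeping $[w]_{A_1}\lesssim t$ \emph{is} the open problem, not its solution. Your transference step is also nontrivial in this direction: averaging over random dyadic grids produces upper bounds for $H$ from dyadic operators, not lower bounds.

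The paper's route is structurally different and sidesteps both obstacles. It never constructs an $A_1$ weight and test function, and it never passes through a dyadic model operator. Instead, Rubio de Francia extrapolation followed by duality yields, for any $A_2$ weight $w$ with $\sigma=w^{-1}$,
\[
\|H(w\chi_{[0,1)})\|_{L^2(\sigma)}\le 4\,\varphi_H\bigl(2\|M\|_{L^2(\sigma)\to L^2(\sigma)}\bigr)\Bigl(\int_0^1 w\Bigr)^{1/2}.
\]
So the task becomes: build an $A_2$ weight with $\int_0^1 w=1$, $\|M\|_{L^2(\sigma)\to L^2(\sigma)}\le C_1 t$, and $\|H(w\chi_{[0,1)})\|_{L^2(\sigma)}\ge C_2 t\log t$. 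The $A_1$ constant of the eventual witness weight is buried inside the Rubio de Francia algorithm and never estimated directly; what must be controlled is $\|M\|_{L^2(\sigma)\to L^2(\sigma)}$, which reduces to a Sawyer-type testing condition over triadic intervals. This relaxation is what makes the explicit construction possible: the paper's $w$ is a $1$-periodic weight built by a triadic recursion of depth $n\sim t$ (so $w$ varies by a factor $\sim 2^t$ and is wildly non-$A_1$), with $k\sim\log t$ nested scales inside each recursive step; those $k$ scales provide the $k$ coherent positive contributions to $H(w\chi_{[0,1)})$ on special ``tail'' intervals, while the Sawyer testing bound is verified by hand. None of this machinery is present in your outline, and the direct $A_1$ path you sketch is precisely the one on which the previous literature stalled below the sharp exponent.
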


As a trivial corollary we obtain that the inequality
$$
\|Hf\|_{L^{1,\infty}(w)}\le \psi([w]_{A_1})\|f\|_{L^1(w)}
$$
fails in general for every increasing on $[1,\infty)$ function $\psi$ satisfying $\lim_{t\to\infty}\frac{\psi(t)}{t\log({\rm{e}}+t)}=0$.

\section{Proof of Theorem \ref{main}}
\subsection{An overview of the proof}
At the first step we show that the definition of $\f_H$ along with the standard extrapolation and dualization arguments yields
\begin{equation}\label{cor}
\|H(w\chi_{[0,1)})\|_{L^2(\si)}\le 4\f_{H}(2\|M\|_{L^2(\si)\to L^2(\si)})\Big(\int_0^1w\Big)^{1/2},
\end{equation}
where $\si=w^{-1}$.
Notice that $\|M\|_{L^2(\si)\to L^2(\si)}<\infty$ if and only if $w\in A_2$, that is, if $\sup_{I\subset{\mathbb R}}\frac{w(I)\si(I)}{|I|^2}<\infty$.
Therefore, we assume here that $w\in A_2$.

The key step is to show that
there exist $C_1,C_2,C_3>0$ such that for every $t>C_3$, there is an $A_2$ weight $w$ satisfying
\begin{equation}\label{mainpr}
\int_0^1w=1,\
\|M\|_{L^2(\si)\to L^2(\si)}\le C_1t,\ \|H(w\chi_{[0,1)})\|_{L^2(\si)}\ge C_2t\log t.
\end{equation}
Plugging these estimates into (\ref{cor}), we finish the proof.

\subsection{Extrapolation and dualization} First, we apply the standard Rubio de Francia extrapolation trick.
Given $g\ge 0$ with $\|g\|_{L^2(\si)}=1$, define
$${\mathcal R}g(x)=\sum_{k=0}^{\infty}\frac{M^kg(x)}{(2\|M\|_{L^2(\si)\to L^2(\si)})^k}.$$
Then $g\le {\mathcal R}g$, $\|{\mathcal R}g\|_{L^2(\si)}\le 2$, and $[{\mathcal R}g]_{A_1}\le 2\|M\|_{L^2(\si)\to L^2(\si)}$. These estimates along with the definition of $\f_H$ and
H\"older's inequality imply
\begin{eqnarray*}
\a \int\limits_{\{x:|Hf(x)|>\a\}}g&\le& \a\int\limits_{\{x:|Hf(x)|>\a\}}{\mathcal R}g\le \f_{H}(2\|M\|_{L^2(\si)\to L^2(\si)})\|f\|_{L^1({\mathcal R}g)}\\
&\le&\f_{H}(2\|M\|_{L^2(\si)\to L^2(\si)})\|f\|_{L^2(w)}\|{\mathcal R}g\|_{L^2(\si)}\\
&\le& 2\f_{H}(2\|M\|_{L^2(\si)\to L^2(\si)})\|f\|_{L^2(w)}.
\end{eqnarray*}
Taking here the supremum over all $g\ge 0$ with $\|g\|_{L^2(\si)}=1$ yields
\begin{equation}\label{extr}
\|Hf\|_{L^{2,\infty}(w)}\le 2\f_{H}(2\|M\|_{L^2(\si)\to L^2(\si)})\|f\|_{L^2(w)}.
\end{equation}

We now use the following elementary estimate:
\begin{equation}\label{el}
\int_0^1|Hf|w\le 2\|Hf\|_{L^{2,\infty}(w)}\Big(\int_0^1w\Big)^{1/2},
\end{equation}
which along with (\ref{extr}) implies
\begin{eqnarray*}
\left|\int_{{\mathbb R}}(H(w\chi_{[0,1)}))f\right|&=&\left|\int_0^1(Hf)w\right|\\
&\le& 4\f_{H}(2\|M\|_{L^2(\si)\to L^2(\si)})\Big(\int_0^1w\Big)^{1/2}\|f\|_{L^2(w)}.
\end{eqnarray*}
Taking here the supremum over all $f$ with $\|f\|_{L^2(w)}=1$ proves (\ref{cor}).

To show (\ref{el}), notice that for every $\la>0$,
\begin{eqnarray*}
\int_0^1|Hf|w&\le& \int_{\la}^{\infty}w\{x:|Hf(x)|>\a\}d\a+\la\int_0^1w\\
&\le& \frac{1}{\la}\|Hf\|_{L^{2,\infty}(w)}^2+\la\int_0^1w.
\end{eqnarray*}
Optimizing this estimate with respect to $\la$, we obtain (\ref{el}).

\subsection{Construction of the weight} Fix $t\gg 1$. Take $k\in {\mathbb N}$ such that $t\le 3^k\le 3t$.
Let $\e=3^{-k}$ and $p=\frac{1}{3\e}\Big(\frac{1+\e}{2}+\frac{4\e^2}{1+\e}\Big)$. The reason for this definition of $p$ will be
clarified a bit later. Note that we will frequently use the obvious estimate $\frac{1}{6\e}\le p\le \frac{2}{\e}$.

For every two positive numbers $\o$ and $\si$ such that $\o\si=p$ and any interval $I\subset {\mathbb R}$, we
define inductively the sequence of weights $w_\nu=w_\nu(\o,\si,I)$ ($\nu=0,1,2,\dots$) supported on $I$ as follows.

Let $u=\sqrt p+\sqrt{p-1}$ be the larger root of $u+\frac{1}{u}=2\sqrt p$. Define
$$w_0(\o,\si,I)=\frac{\o}{\sqrt p}\Big(u\chi_{I_-}+\frac{1}{u}\chi_{I_+}\Big)\,,$$
where $I_-$ and $I_+$ are the left and the right halves of $I$ respectively.

Suppose that $w_{\nu-1}(\o,\si,I)$ is already defined for all $\o,\si$ with $\o\si=p$ and all $I\subset {\mathbb R}$.
To construct $w_\nu(\o,\si,I)$, first denote by $I_m$, $m=0,1,\dots,k-1$ the interval with the same right endpoint as $I$ of length $3^{-m}|I|$, so
$$I_{k-1}\subset I_{k-2}\subset\dots\subset I_0=I$$
and $|I_{k-1}|=3\e|I|$.

Given an interval $J$, denote by $J^{(i)}, i=1,2,3$, the $i$-th from the left subinterval of $J$ in the partition of $J$ into
$3$ equal intervals.

Define $w_\nu(\o,\si,I)$ by
\begin{eqnarray}
\,\,\,\,\,\,\,\,\,\,w_\nu(\o,\si,I)&=&\frac{\o}{p}\Biggl(\sum_{m=0}^{k-2}\chi_{I_m^{(1)}}+\chi_{I_{k-1}^{(1)}\cup I_{k-1}^{(2)}}+\frac{4\e}{1+\e}\chi_{I_{k-1}^{(3)}}\Biggr)\label{weight}\\
&+&\sum_{m=0}^{k-2}w_{\nu-1}\Big(2\o,\frac{\si}{2},I_m^{(2)}\Big).\nonumber
\end{eqnarray}

\begin{figure}[H]
\begin{center}
\begin{tikzpicture}
\draw(0,0)--(4.5,0);

\foreach \x/\xtext in {0/$$,3/$$,4.5/$$}
   \draw(\x,3pt)--(\x,-3pt) node[below] {\xtext};

\foreach \x/\xtext in {0.1/$$,0.2/$$,
0.3/$$,0.4/$$,0.5/$$,0.6/$$,0.7/$$,0.8/$$,0.9/$$,1/$$,1.1/$$,1.2/$$,1.3/$$,1.4/$$}
   \draw(\x,1pt)--(\x,-1pt) node[below] {\xtext};

\filldraw (1.5,0) circle (1pt);
\draw (0.7,0.6) node {{\textcolor{blue}{$\frac{\o}{p}$}}};
\draw[decorate, decoration={brace}, yshift=2ex]  (1.6,0) -- node[above=0.4ex] {$w_{\nu-1}$}  (2.9,0);
\draw[decorate, decoration={brace}, yshift=2ex]  (3.1,0) -- node[above=0.4ex] {$I_{m+1}$}  (4.4,0);
\draw[decorate, decoration={brace,mirror}, yshift=-8ex]  (0,1) -- node[below=0.4ex] {$I_m$}  (4.5,1);

\draw(6,0)--(9,0);

\foreach \x/\xtext in {6/$$,9/$$}
   \draw(\x,3pt)--(\x,-3pt) node[below] {\xtext};

\filldraw (7,0) circle (1pt);
\filldraw (8,0) circle (1pt);

\foreach \x/\xtext in {6.1/$$,6.2/$$,
6.3/$$,6.4/$$,6.5/$$,6.6/$$,6.7/$$,6.8/$$,6.9/$$}
   \draw(\x,1pt)--(\x,-1pt) node[below] {\xtext};

\foreach \x/\xtext in {7.1/$$,7.2/$$,
7.3/$$,7.4/$$,7.5/$$,7.6/$$,7.7/$$,7.8/$$,7.9/$$}
   \draw(\x,1pt)--(\x,-1pt) node[below] {\xtext};

\foreach \x/\xtext in {8.1/$$,8.2/$$,
8.3/$$,8.4/$$,8.5/$$,8.6/$$,8.7/$$,8.8/$$,8.9/$$}
   \draw(\x,1pt)--(\x,-1pt) node[below] {\xtext};

\draw (6.5,0.6) node {{\textcolor{blue}{$\frac{\o}{p}$}}};
\draw (7.5,0.6) node {{\textcolor{blue}{$\frac{\o}{p}$}}};
\draw (8.5,0.6) node {{\textcolor{blue}{$\frac{4\e\o}{(1+\e)p}$}}};
\draw[decorate, decoration={brace,mirror}, yshift=-8ex]  (6,1) -- node[below=0.4ex] {$I_{k-1}$}  (9,1);
\end{tikzpicture}
\caption{$w_{\nu}(\o,\si,I)$ on intervals $I_{m}^{(i)}$ for $i=1,2$ and $0\le m\le k-2$ and on $I_{k-1}^{(i)}$ for $i=1,2,3$.}
\end{center}
\end{figure}
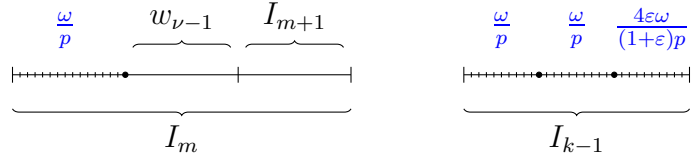

Note that the interval $I^{(3)}_{k-1}$  plays a rather special role in the final step of this recursive construction. We shall call any interval of this type
arising at any step in the construction of the weight $w_\nu(\o,\si,I)$
a ``tail interval'', so within $I$ we shall have one tail interval $I^{(3)}_{k-1}$ arising at the final stage of the construction,
$k-1$ tail intervals $(I_m^{(2)})^{(3)}_{k-1}$ arising in the construction of the weights $w_{\nu-1}(2\o,\si/2,I_m^{(2)})$, and so on.

Finally, we define $w$ as the $1$-periodization of $w_n(1,p,[0,1))$ with $n=3^{k-1}$.


For $l=0,1,\dots,n$, we shall say that an interval $I$ ``carries $w_{n-l}$'' if $w=w_{n-l}(2^l, 2^{-l}p,I)$ on~$I$. Denote by $\text{supp}\,w_{n-l}$ the union of all intervals
carrying $w_{n-l}$. For example, $\text{supp}\,w_n=\cup_{k\in {\mathbb Z}}[k,k+1)=\mathbb R$ as $[k,k+1)$
carries~$w_n$ for every $k\in{\mathbb Z}$.

Let us now establish several useful properties of $w_\nu(\o,\si,I)$.

\begin{prop}\label{prop1} For every $\nu\ge 0$,
\begin{equation}\label{av}
\frac{1}{|I|}\int_Iw_\nu(\o,\si,I)dx=\o\quad\text{and}\quad \frac{1}{|I|}\int_Iw_\nu^{-1}(\o,\si,I)dx=\si.
\end{equation}
\end{prop}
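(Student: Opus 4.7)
The plan is to prove both identities simultaneously by induction on $\nu\ge 0$, using the key observation that the rescaling $(\omega,\sigma)\mapsto(2\omega,\sigma/2)$ preserves the constraint $\omega\sigma=p$, so the induction hypothesis transfers verbatim to the recursive pieces $w_{\nu-1}(2\omega,\sigma/2,I_m^{(2)})$. For the base case $\nu=0$, the identity $u+u^{-1}=2\sqrt p$ makes both averages one-line computations: $\frac{1}{|I|}\int_I w_0=\frac{\omega}{2\sqrt p}(u+u^{-1})=\omega$ and $\frac{1}{|I|}\int_I w_0^{-1}=\frac{\sqrt p}{2\omega}(u^{-1}+u)=p/\omega=\sigma$.

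For the inductive step, I would first record the disjoint decomposition
$$I=\bigsqcup_{m=0}^{k-2}\bigl(I_m^{(1)}\sqcup I_m^{(2)}\bigr)\;\sqcup\; I_{k-1}^{(1)}\sqcup I_{k-1}^{(2)}\sqcup I_{k-1}^{(3)},$$
which follows from the identity $I_m^{(3)}=I_{m+1}$ (both are the rightmost sub-interval of length $3^{-(m+1)}|I|$ inside $I$). With $|I_m^{(j)}|/|I|=3^{-(m+1)}$, $|I_{k-1}^{(i)}|/|I|=\varepsilon$, and the geometric sum $S:=\sum_{m=0}^{k-2}3^{-(m+1)}=(1-3\varepsilon)/2$, the induction hypothesis provides the averages $2\omega$ and $\sigma/2$ of $w_{\nu-1}(2\omega,\sigma/2,I_m^{(2)})$ and of its reciprocal over $I_m^{(2)}$.

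Plugging everything into (\ref{weight}), the $\omega$-average becomes
$$\frac{\omega}{p}\Bigl(S+2\varepsilon+\frac{4\varepsilon^2}{1+\varepsilon}\Bigr)+2\omega\cdot S=\frac{\omega}{p}\Bigl(\frac{1+\varepsilon}{2}+\frac{4\varepsilon^2}{1+\varepsilon}\Bigr)+(1-3\varepsilon)\omega,$$
and the very definition of $p$ collapses the first summand to $3\varepsilon\omega$, closing the identity at $\omega$. For the $\sigma$-average, the reciprocal on the tail interval $I_{k-1}^{(3)}$ is $(1+\varepsilon)p/(4\varepsilon\omega)$, which contributes $(1+\varepsilon)/4$ inside the $p/\omega$ prefactor, giving
$$\frac{p}{\omega}\Bigl(S+2\varepsilon+\frac{1+\varepsilon}{4}\Bigr)+\frac{\sigma}{2}\cdot S=\frac{p}{\omega}\cdot\frac{3(1+\varepsilon)}{4}+\frac{\sigma(1-3\varepsilon)}{4},$$
which after substituting $p/\omega=\sigma$ telescopes to $\sigma$.

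There is no real obstacle here. The only point worth flagging is that the value of $p$ given in the statement is precisely engineered so that the $\omega$-identity closes on the nose; once that is accepted, the $\sigma$-identity costs only the constraint $\omega\sigma=p$, because the tail weight $4\varepsilon\omega/((1+\varepsilon)p)$ was chosen so that its reciprocal's contribution $(1+\varepsilon)/4$ balances exactly against the $(1+\varepsilon)/2$ coming from the non-tail constant pieces. So the real content of the proposition is just the book-keeping that justifies the specific choice of $p$.
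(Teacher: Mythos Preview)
Your proposal is correct and follows essentially the same inductive argument as the paper's proof: the base case via $u+u^{-1}=2\sqrt p$, and the inductive step by grouping the constant pieces (total measure $\frac{1+\varepsilon}{2}|I|$), the tail interval (measure $\varepsilon|I|$), and the recursive pieces (total measure $\frac{1-3\varepsilon}{2}|I|$), then invoking the definition of $p$ for the $\omega$-identity and the constraint $\omega\sigma=p$ for the $\sigma$-identity. Your explicit bookkeeping with $S=(1-3\varepsilon)/2$ and the decomposition $I_m^{(3)}=I_{m+1}$ is a slightly more detailed write-up of exactly the same computation.
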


\begin{proof} The proof is by induction on $\nu$. For $\nu=0$,
$$
\frac{1}{|I|}\int_Iw_0(\o,\si,I)dx=\frac{\o}{\sqrt p}\frac{1}{2}(u+1/u)=\o,
$$
and
$$
\frac{1}{|I|}\int_Iw_0^{-1}(\o,\si,I)dx=\frac{\sqrt p}{\o}\frac{1}{2}(1/u+u)=\frac{p}{\o}=\si.
$$

Assume that the statement holds for $\nu-1$ and let us prove it for $\nu$. Observe that $w_\nu(\o,\si,I)$ equals $\frac{\o}{p}$ on a subset of $I$ of total measure
$$\frac{1-3\e}{2}|I|+2\e|I|=\frac{1+\e}{2}|I|,$$
it equals $\frac{4\e}{1+\e}\frac{\o}{p}$ on a set of measure $\e|I|$, and the average of $w_{\nu-1}(2\o,\cdot,\cdot)$ over the remaining set of measure $\frac{1-3\e}{2}|I|$ is $2\o$
by the inductive assumption. Thus
\begin{eqnarray*}
\frac{1}{|I|}\int_Iw_\nu(\o,\si,I)dx&=&\frac{\o}{p}\left(\frac{1+\e}{2}+\frac{4\e^2}{1+\e}\right)+\o(1-3\e)\\
&=&\o+\left(\frac{1}{p}\Big(\frac{1+\e}{2}+\frac{4\e^2}{1+\e}\Big)-3\e\right)\o=\o
\end{eqnarray*}
(it is this equation that was used to determine $p$).

On the other hand, $w_\nu^{-1}(\o,\si,I)$ equals $\frac{p}{\o}=\si$ on a subset of $I$ of measure $\frac{1+\e}{2}|I|$
(the same set on which $w_\nu$ is defined as $\o/p$), it equals $\frac{1+\e}{4\e}\si$ on a set of measure $\e|I|$, and its average over the remaining set
of measure $\frac{1-3\e}{2}|I|$ equals $\frac{\si}{2}$. Thus
$$
\frac{1}{|I|}\int_Iw_\nu^{-1}(\o,\si,I)dx=\frac{1+\e}{2}\si+\frac{1+\e}{4}\si+\frac{(1-3\e)}{2}\frac{\si}{2}=\si,
$$
which completes the proof.
\end{proof}

In particular, it follows from Proposition \ref{prop1} that for the constructed weight $w$,
$$
\int_0^1wdx=\int_0^1w_n(1,p,[0,1))dx=1.
$$

\begin{prop}\label{prop2} Let $I=[a,a+h)$. Then, for every $\nu\ge 0$ and for all $0<\tau<h$,
\begin{equation}\label{avest}
\frac{1}{\tau}\int_a^{a+\tau}w_\nu(\o,\si,I)\le 3\o\quad\text{and}\quad \frac{1}{\tau}\int_{a+h-\tau}^{a+h}w_\nu(\o,\si,I)\le \frac{9}{2}\o.
\end{equation}
\end{prop}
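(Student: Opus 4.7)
The plan is to decompose the averaging interval $[a,a+\tau)$ (respectively $[a+h-\tau,a+h)$) according to the partition of $I$ given by (\ref{weight}) and to bound the contribution of each piece. The ``flat'' portions where $w_\nu$ equals $\o/p$ or $\frac{4\e\o}{(1+\e)p}$ contribute at most $(\o/p)\tau\le 6\e\o\tau$ (since $p\ge 1/(6\e)$), which is negligible. Each fully-included recursive subinterval $I_j^{(2)}$ contributes exactly $2\o|I_j^{(2)}|=2\o h/3^{j+1}$ by Proposition~\ref{prop1}, and the single partially-included piece $I_m^{(2)}$ is bounded by its full integral, again $2\o h/3^{m+1}$ by Proposition~\ref{prop1}. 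The base case $\nu=0$ is immediate from $u\le 2\sqrt p$, which gives $w_0\le 2\o$ on $I_-$ and $w_0\le\o/p$ on $I_+$.

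For the left estimate, I locate $a+\tau$ in one of the pieces. If $a+\tau\in I_m^{(2)}$, summing the contributions yields the upper bound $(\o/p+2\o)h(3-3^{-m})/6$. Since $\tau>h(1-2\cdot 3^{-(m+1)})$ in this subcase, a direct arithmetic check shows this is at most $3\o\tau$ uniformly in $m\ge 0$, using only $p\ge 1$. The subcases $a+\tau\in I_m^{(1)}$ or in a tail piece $I_{k-1}^{(j)}$ are easier and yield even smaller totals.

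For the right estimate I additionally need
\[\int_{I_{m+1}}w_\nu=\o h-\frac{(\o/p+2\o)h(1-3^{-(m+1)})}{2}\le\o|I_{m+1}|,\]
which is obtained by subtracting the exact contributions on $I\setminus I_{m+1}$ from $\o h$ (Proposition~\ref{prop1} and (\ref{weight})), the final inequality reducing to $1/p\ge 0$. Combined with the bound $2\o h/3^{m+1}$ on the partial piece in $I_m^{(2)}$, this gives $\int_{a+h-\tau}^{a+h}w_\nu\le 3\o h/3^{m+1}\le 3\o\tau$ whenever $a+h-\tau\in I_m^{(2)}$ (using $\tau>h/3^{m+1}$); the subcase $a+h-\tau\in I_m^{(1)}$ is handled similarly via $\tau>2h/3^{m+1}$, giving a total at most $(1/p+3/2)\o\tau$, while tail subcases are trivial. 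The generous factor $9/2$ accommodates the simultaneous contribution of a partial recursive piece and the entire tail $I_{m+1}$.

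The main obstacle I anticipate is verifying the arithmetic uniformly in $m$. The tightest subcase for the left bound is $m=0$, where there are no fully-included recursive pieces to dilute the partial one and one ends up needing $(1/p+2)/3\le 1$; this reduces to $p\ge 1$ and is sharp in the limit $\tau\to h/3$. The remaining subcases have ample slack from the geometric decay in $3^{-m}$.
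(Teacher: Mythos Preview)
Your argument is correct, but the paper's is shorter and avoids the case analysis on $m$ entirely. For the left inequality the paper makes only one cut: since $w_\nu\equiv\o/p$ on the leftmost third $I^{(1)}$, the average equals $\o/p$ whenever $\tau<h/3$, while for $\tau\ge h/3$ one simply bounds
\[
\frac{1}{\tau}\int_a^{a+\tau}w_\nu\le\frac{3}{|I|}\int_I w_\nu=3\o
\]
by Proposition~\ref{prop1}. For the right inequality the paper locates $a+h-\tau\in I_m\setminus I_{m+1}$, bounds $\tau$ below by $|I_{m+1}|$ and the integral above by $\int_{I_m}w_\nu$, and then sums directly: $\int_{I_m}w_\nu\le\o\sum_{j=m}^{k-1}|I_j|\le\tfrac{3}{2}\o|I_m|=\tfrac{9}{2}\o|I_{m+1}|$. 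Your finer decomposition---subtracting $I\setminus I_{m+1}$ exactly to obtain $\int_{I_{m+1}}w_\nu\le\o|I_{m+1}|$, then adding back the partial $I_m^{(2)}$ piece---actually yields the sharper constant $3$ in place of $\tfrac{9}{2}$ in the subcase $a+h-\tau\in I_m^{(2)}$, at the cost of more bookkeeping and the $m$-by-$m$ arithmetic check you flag. Both arguments rest only on Proposition~\ref{prop1} and $p\ge 1$; neither needs induction on $\nu$ beyond the trivial base case, so your ``base case'' framing is harmless but unnecessary.
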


\begin{proof} For $\nu=0$ the statement is obvious since $w_0(\o,\si,I)\le 2\o$ on~$I$. Assume that $\nu\ge 1$.

Since $w_\nu(\o,\si,I)=\frac{\o}{p}$ on $I^{(1)}$, we have that
$\frac{1}{\tau}\int_a^{a+\tau}w_\nu(\o,\si,I)=\frac{\o}{p}$ for $0<\tau<h/3$. But if $\tau\ge h/3$, then, by Proposition \ref{prop1},
$$\frac{1}{\tau}\int_a^{a+\tau}w_\nu(\o,\si,I)dx\le \frac{3}{|I|}\int_Iw_\nu(\o,\si,I)dx=3\o\,.$$

We now turn to the proof of the second estimate in (\ref{avest}).
Let $I_m, m=0,1,\dots, k-1,$ be the intervals appearing in the definition of $w_\nu(\o,\si,I)$.
Since $w_\nu(\o,\si,I)\le \frac{\o}{p}$ on $I_{k-1}$, the estimate is trivial if $a+h-\tau\in I_{k-1}$.
Assume that $a+h-\tau\in I_{m}\setminus I_{m+1},m=0,\dots,k-2$. Then
\begin{equation}\label{int}
\frac{1}{\tau}\int_{a+h-\tau}^{a+h}w_\nu(\o,\si,I)dx\le \frac{1}{|I_{m+1}|}\int_{I_m}w_\nu(\o,\si,I)dx.
\end{equation}
Next, by Proposition \ref{prop1},
\begin{eqnarray}
\int_{I_m}w_\nu(\o,\si,I)dx&=&\sum_{j=m}^{k-2}\left(\frac{\o}{p}|I_j^{(1)}|+\int_{I_j^{(2)}}w_{\nu-1}(2\o,\si/2,I_j^{(2)})dx\right)\nonumber\\
&+&\int_{I_{k-1}}w_\nu(\o,\si,I)dx\nonumber\\
&\le&\o\sum_{j=m}^{k-1}|I_j|\le \frac{3\o}{2}\frac{|I|}{3^m}=\frac{9\o}{2}|I_{m+1}|\label{intest},
\end{eqnarray}
which along with (\ref{int}) completes the proof.
\end{proof}

Assume that $I$ carries $w_{n-l}$. Consider the corresponding tail intervals contained in $I$
(that is, the intervals on which $w=\frac{4\e}{1+\e}\frac{2^j}{p}, j=l,\dots,n-1$).

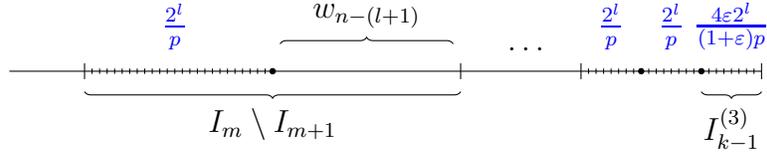
\begin{figure}[H]
\begin{center}
\begin{tikzpicture}
\draw(0,0)--(10,0);

\foreach \x/\xtext in {1/$$,6/$$,   7.6/$$,10/$$}
   \draw(\x,3pt)--(\x,-3pt) node[below] {\xtext};

\filldraw (8.4,0) circle (1pt);
\filldraw (9.2,0) circle (1pt);
\filldraw (3.5,0) circle (1pt);

\foreach \x/\xtext in {7.7/$$,7.8/$$,
7.9/$$,8/$$,8.1/$$,8.2/$$,8.3/$$}
\draw(\x,1pt)--(\x,-1pt) node[below] {\xtext};

\foreach \x/\xtext in {8.5/$$,8.6/$$,
8.7/$$,8.8/$$,8.9/$$,9/$$,9.1/$$}
\draw(\x,1pt)--(\x,-1pt) node[below] {\xtext};

\foreach \x/\xtext in {9.3/$$,9.4/$$,
9.5/$$,9.6/$$,9.7/$$,9.8/$$,9.9/$$}
\draw(\x,1pt)--(\x,-1pt) node[below] {\xtext};

\foreach \x/\xtext in {1.1/$$,1.2/$$,
1.3/$$,1.4/$$,1.5/$$,1.6/$$,1.7/$$,1.8/$$,
1.9/$$,2/$$,2.1/$$,2.2/$$,2.3/$$,2.4/$$,
2.5/$$,2.6/$$,2.7/$$,2.8/$$,2.9/$$,3/$$,
3.1/$$,3.2/$$,3.3/$$,3.4/$$}
\draw(\x,1pt)--(\x,-1pt) node[below] {\xtext};

\draw (2.2,0.6) node {{\textcolor{blue}{$\frac{2^l}{p}$}}};
\draw[decorate, decoration={brace}, yshift=2ex]  (3.6,0) -- node[above=0.4ex] {$w_{n-(l+1)}$}  (5.9,0);
\draw (8,0.6) node {{\textcolor{blue}{$\frac{2^l}{p}$}}};
\draw (8.8,0.6) node {{\textcolor{blue}{$\frac{2^l}{p}$}}};
\draw (9.6,0.6) node {{\textcolor{blue}{$\frac{4\e2^l}{(1+\e)p}$}}};
\draw (6.9,0.3) node {$\dots$};
\draw[decorate, decoration={brace,mirror}, yshift=-7ex]  (9.2,1) -- node[below=0.4ex] {$I^{(3)}_{k-1}$}  (10,1);
\draw[decorate, decoration={brace,mirror}, yshift=-7ex]  (1,1) -- node[below=0.4ex] {$I_m\setminus I_{m+1}$}  (6,1);
\end{tikzpicture}
\caption{$w$ on some interval $I$ carrying $w_{n-l}$ for $l<n$.}
\end{center}
\end{figure}

These intervals will play the central role in the estimate of the Hilbert transform of $w\chi_{[0,1)}$.
There is only one tail interval in $I\setminus \text{supp}\,w_{n-(l+1)}$, and its measure equals $\frac{1}{3^k}|I|$.
Next, there are $k-1$ tail intervals
in
$$I\cap\big(\text{supp}\,w_{n-(l+1)}\setminus \text{supp}\,w_{n-(l+2)}\big)$$
of total measure
$\frac{1}{2}\Big(1-\frac{1}{3^{k-1}}\Big)\frac{1}{3^k}|I|\,.$
Similarly, the measure of the union of tail intervals in
$$I\cap\big(\text{supp}\,w_{n-(l+j)}\setminus \text{supp}\,w_{n-(l+j+1)}\big)\quad(j=0,\dots,n-l-1)$$
is $\Big(\frac{1}{2}\Big(1-\frac{1}{3^{k-1}}\Big)\Big)^j\frac{1}{3^k}|I|$.

In particular, if we denote by $A_l$ the union of tail intervals in
$$[0,1)\cap\big(\text{supp}\,w_{n-l}\setminus \text{supp}\,w_{n-(l+1)}\big),$$
then
\begin{equation}\label{Al}
|A_l|=\Big(\frac{1}{2}\Big(1-\frac{1}{3^{k-1}}\Big)\Big)^l\frac{1}{3^k}\quad(l=0,\dots,n-1).
\end{equation}

\subsection{Estimate of the maximal operator} In this section, we will prove the first inequality in (\ref{mainpr}). We start with the reduction of this estimate to its
triadic version.

Let ${\mathcal T}$ be the standard triadic lattice, that is,
$${\mathcal T}=\{[3^jn,3^j(n+1)),\quad j,n\in {\mathbb Z}\}.$$
Denote by ${\mathcal J}$ the family of all unions of two adjacent triadic intervals of equal length.

Our key tool will be the following estimate:
\begin{equation}\label{keytool}
\|M\|_{L^2(\si)\to L^2(\si)}\le 24\sup_{J\in{\mathcal J}}\left(\frac{1}{w(J)}\int_J(M(w\chi_J))^2\si\right)^{1/2}.
\end{equation}
This estimate is fairly standard and well-known. For reader's convenience, we supply the proof in the Appendix.

Combining (\ref{keytool}) with the inequality $p\le \frac{2}{\e}\le 6t$, we see that
in order to prove the first estimate in (\ref{mainpr}), it
suffices to show that there exists $C>0$ such that for every interval $J\in {\mathcal J}$,
\begin{equation}\label{sufshow}
\int_J(M(w\chi_J))^2\si\le Cp^2w(J).
\end{equation}

Define an auxiliary $1$-periodic function $\widetilde w$ by
$$\widetilde w(x)=\sum_{l=1}^n2^l\chi_{\text{supp}\,w_{n-(l-1)}\setminus\text{supp}\,w_{n-l}}(x)+2^{n+1}\chi_{\text{supp}\,w_0}(x)\,.$$
The role of this function is clarified in the following two propositions.

\begin{prop}\label{max} For all $x\in {\mathbb R}$,
\begin{equation}\label{maxest}
Mw(x)\le \frac{9}{2}\widetilde w(x)\,.
\end{equation}
\end{prop}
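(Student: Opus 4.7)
Fix $x\in\mathbb{R}$ and let $l=l(x)\in\{1,\dots,n+1\}$ be the unique integer with $\widetilde w(x)=2^l$, so that $x\in\text{supp}\,w_{n-(l-1)}\setminus\text{supp}\,w_{n-l}$ when $l\le n$ and $x\in\text{supp}\,w_0$ when $l=n+1$. Associated with $x$ is the nested chain $I_0(x)\supset I_1(x)\supset\cdots\supset I_{l-1}(x)$, where $I_{l'}(x)$ is the unique interval carrying $w_{n-l'}$ that contains $x$ (with one extra level $I_n(x)$ in the case $l=n+1$). The plan is to prove, for every interval $J\ni x$, that $\frac{1}{|J|}\int_J w\le\frac{9}{2}\widetilde w(x)$, by splitting into two cases according to whether $J$ fits inside a single unit interval.

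\emph{Case A: $J\subseteq I_0(x)$.} Let $m=m(J)$ be the \emph{largest} index with $J\subseteq I_m(x)$, so $m\le l-1$ (or $m\le n$ when $l=n+1$), and set $I:=I_m(x)$, so that $w|_I=w_{n-m}(2^m,2^{-m}p,I)$. For $m\le n-1$, decompose $I$ as the disjoint union of its tail region $T$ (where $w$ is pointwise at most $2^m/p\le 2^m$) and the recursive sub-intervals $I_j^{(2)}(I)$, $j=0,\dots,k-2$, each of which carries $w_{n-(m+1)}$ with outer constant $2^{m+1}$. The maximality of $m$ forces $J$ to lie in no single $I_j^{(2)}(I)$, so every nonempty $J\cap I_j^{(2)}(I)$ is either anchored at an endpoint of $I_j^{(2)}(I)$ or is all of $I_j^{(2)}(I)$. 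Propositions \ref{prop1} and \ref{prop2} then give
\[
\int_{J\cap I_j^{(2)}(I)}w\le\frac{9}{2}\cdot 2^{m+1}\,|J\cap I_j^{(2)}(I)|
\]
in every configuration; adding $\int_{J\cap T}w\le 2^m|J\cap T|\le \frac{9}{2}\cdot 2^{m+1}|J\cap T|$ and summing yields $\int_J w\le 9\cdot 2^m|J|$, whence $\frac{1}{|J|}\int_J w\le 9\cdot 2^m\le 9\cdot 2^{l-1}=\frac{9}{2}\widetilde w(x)$. The corner case $l=n+1$, $m=n$ is handled directly by the pointwise estimate $w=w_0\le 2\cdot 2^n=2^{n+1}=\widetilde w(x)$, a consequence of $u\le 2\sqrt p$.

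\emph{Case B: $J\not\subseteq I_0(x)$.} Partition $J$ at the integer points it crosses into two end pieces (anchored at endpoints of unit intervals) plus a possibly empty collection of complete unit intervals in between. Proposition \ref{prop1} contributes $1$ on each complete unit piece, and Proposition \ref{prop2} with $\omega=1$ bounds the two end pieces by $\frac{9}{2}$ and $3$ times their lengths respectively. Summing gives $\int_J w\le\frac{9}{2}|J|$, and $\frac{9}{2}\le\frac{9}{2}\widetilde w(x)$ since $\widetilde w(x)\ge 2$.

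The main obstacle is the structural verification in Case A that the choice of $m(J)$ as the \emph{largest} scale with $J\subseteq I_m(x)$ really does force each nonempty $J\cap I_j^{(2)}(I)$ to be either full or anchored at an endpoint of $I_j^{(2)}(I)$ — for otherwise $J$ would be properly contained in some $I_j^{(2)}(I)$, contradicting the maximality of $m$. Once this is in place, Proposition \ref{prop2} applies piece by piece, and the factor $\frac{9}{2}$ in the conclusion is dictated by its worse right-anchored bound; the final arithmetic $9\cdot 2^{l-1}=\frac{9}{2}\cdot 2^l$ closes the proof.
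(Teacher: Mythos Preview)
Your proof is correct and rests on the same two ingredients as the paper's: the pointwise bound for $w$ outside $\text{supp}\,w_{n-l}$, and Proposition~\ref{prop2} applied to intersections that are anchored at an endpoint of an interval carrying some $w_{n-l'}$.

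The organization, however, is more elaborate than necessary. The paper avoids both your nested chain and your Case~B split by working \emph{globally} at the single level~$l$: since $x\notin\text{supp}\,w_{n-l}$, any interval $K\ni x$ (of any size, not just $K\subseteq I_0(x)$) can lie in no interval carrying $w_{n-l}$, so each nonempty intersection $K\cap J$ with such a $J$ is automatically anchored, and Proposition~\ref{prop2} gives the average bound $\tfrac{9}{2}\cdot 2^l$ there; on the complement of $\text{supp}\,w_{n-l}$ one has $w\le 2^{l-1}/p$. Summing finishes the argument in one stroke. Your largest-$m$ device does yield the scale-dependent bound $9\cdot 2^{m}$ (potentially sharper when $m<l-1$), but that refinement is not needed for the proposition, and the price is the extra chain bookkeeping and the separate treatment of intervals crossing integer points.
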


\begin{proof} First, notice that for $x\in \text{supp}\,w_0$ the statement is trivial. Indeed, $w\le\frac{2^{n-1}}{p}$ on the complement of ${\rm{supp}}\,w_{0}$,
and if $I$ carries $w_0$, then on $I$
$$w_0=\frac{2^n}{\sqrt p}\Big((\sqrt{p}+\sqrt{p-1})\chi_{I_-}+\frac{1}{\sqrt{p}+\sqrt{p-1}}\chi_{I_{+}}\Big).$$
Hence,
$$\|w\|_{L^{\infty}}\le \frac{2^n(\sqrt{p}+\sqrt{p-1})}{\sqrt p}\le 2^{n+1},$$
and therefore $\|Mw\|_{L^{\infty}}\le 2^{n+1}$.

On the other hand, for $x\in \text{supp}\,w_{n-(l-1)}\setminus\text{supp}\,w_{n-l}$, the estimate (\ref{maxest})
follows immediately from the facts that $w\le \frac{2^{l-1}}{p}$ on the complement of  ${\rm{supp}}\,w_{n-l}$
and that, by Proposition \ref{prop2}, the average of $w$ over the intersection of any interval $J$ carrying $w_{n-l}$
with an interval not contained in $J$ is at most $\frac{9}{2}\cdot 2^l$.
\end{proof}

Recall that, given an interval $I$, we denoted by $I_m$ ($m=0,\dots,k-1$) the interval with the same
right endpoint as $I$ of length $|I_m|=\frac{1}{3^m}|I|$. These intervals have already appeared in the definition of $w_\nu(\o,\si,I)$.

\begin{prop}\label{partcase} Assume that $I$ carries $w_{n-l}$. Then
$$\int_{I_m}(\widetilde w)^2\si\le 30p^2w(I_m)\quad(l=0,\dots,n;\ m=0,\dots,k-2).$$
\end{prop}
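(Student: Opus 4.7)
The plan is to prove the proposition by downward induction on $l$, from the base case $l=n$ to $l=0$. The strategy is to decompose $I_m$ into a disjoint union of pieces on each of which $w$ is either an explicit constant or falls under the inductive hypothesis; on each piece $E$ I aim to show $\int_E\widetilde w^2\sigma \le 30 p^2\, w(E)$, so that additivity of $w$ gives the claim.

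For the base case $l=n$, the interval $I$ carries $w_0$, on which $w$ takes the two explicit values $\frac{2^n u}{\sqrt p}$ on $I_-$ and $\frac{2^n}{u\sqrt p}$ on $I_+$ (with $u=\sqrt p+\sqrt{p-1}$), while $\widetilde w=2^{n+1}$ on all of $I$. When $m=0$, a direct computation gives $\int_I \widetilde w^2\sigma = 4p\,w(I)$; when $m\ge 1$, $I_m\subset I_+$ and $\int_{I_m}\widetilde w^2\sigma = 4u^2 p\,w(I_m)$. Since $u^2\le 4p$, both bounds are at most $16 p^2 w(I_m) \le 30 p^2 w(I_m)$.

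For the inductive step, use the disjoint decomposition $I_m = \bigcup_{j=m}^{k-2}\bigl(I_j^{(1)}\cup I_j^{(2)}\bigr) \cup I_{k-1}^{(1)}\cup I_{k-1}^{(2)}\cup I_{k-1}^{(3)}$. On the constant pieces $I_j^{(1)}$ and $I_{k-1}^{(1)}\cup I_{k-1}^{(2)}$ one has $w=\frac{2^l}{p}$ and $\widetilde w=2^{l+1}$, so $\widetilde w^2\sigma = 4p\cdot 2^l$ and hence $\int_E\widetilde w^2\sigma = 4p^2\,w(E)$. On the tail $I_{k-1}^{(3)}$ one has $w=\frac{4\e\cdot 2^l}{(1+\e)p}$ and $\widetilde w=2^{l+1}$, giving $\int_{I_{k-1}^{(3)}}\widetilde w^2\sigma = \frac{(1+\e)^2 p^2}{4\e^2}\,w(I_{k-1}^{(3)})$. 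On each recursive piece $I_j^{(2)}$ (carrying $w_{n-(l+1)}$), the inductive hypothesis applied with $m'=0$ yields $\int_{I_j^{(2)}}\widetilde w^2\sigma \le 30 p^2\,w(I_j^{(2)})$. The key numerical fact is $\frac{(1+\e)^2}{4\e^2}\le 9p^2$, which follows from the explicit lower bound $p = \frac{1+\e}{6\e} + \frac{4\e}{3(1+\e)} \ge \frac{1+\e}{6\e}$ visible from the definition of $p$ in Section 2.3. Thus every piecewise multiplier is at most $30 p^2$, and summing over the disjoint pieces yields $\int_{I_m}\widetilde w^2\sigma\le 30 p^2\,w(I_m)$.

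The main subtlety is the tail $I_{k-1}^{(3)}$: there $\widetilde w^2\sigma$ is of order $p/\e$, roughly a factor $p$ larger than on the other constant pieces, so it could a priori dominate the estimate. What saves the bound is the particular relation between $p$ and $\e$ fixed in Section 2.3, engineered precisely so that the tail ratio $\widetilde w^2\sigma/w = \frac{(1+\e)^2 p^2}{4\e^2}$ stays under $30p^2$ with room to spare. Once this tail-multiplier bound is secured, the rest is a clean additive estimate over a disjoint partition.
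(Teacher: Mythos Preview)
Your proof has a genuine gap at the tail estimate. You correctly compute that on $I_{k-1}^{(3)}$ the pointwise ratio $\widetilde w^{\,2}\si/w$ equals $\frac{(1+\e)^2p^2}{4\e^2}$, and you correctly note that $\frac{(1+\e)^2}{4\e^2}\le 9p^2$. But putting these together yields only $\widetilde w^{\,2}\si/w\le 9p^4$ on the tail, not $\le 30p^2$; since $p\sim 1/(6\e)$ is large, the tail multiplier is genuinely of order $p^4$. Thus your assertion that ``every piecewise multiplier is at most $30p^2$'' is false for this piece, and the purely additive argument over the partition cannot close. Your final paragraph claims the relation between $p$ and $\e$ was ``engineered precisely so that the tail ratio \dots\ stays under $30p^2$''; it was not, and it does not.

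The remedy is to give up the pointwise comparison on the tail and instead control $\int_{I_{k-1}^{(3)}}\widetilde w^{\,2}\si$ against $w(I_m)$ rather than $w(I_{k-1}^{(3)})$: one has $\int_{I_{k-1}^{(3)}}\widetilde w^{\,2}\si=(1+\e)p\cdot 2^l|I|$, while $w(I_m)\ge w(I_m^{(2)})=\tfrac{2^{l+1}}{3^{m+1}}|I|\ge 6\e\cdot 2^l|I|$ for $m\le k-2$, so the tail contributes at most $\tfrac{p}{3\e}\,w(I_m)\le 2p^2 w(I_m)$. Even with this repair, your induction with the single constant $30$ does not close at $m=k-2$: there $w(I_m^{(2)})$ is essentially all of $w(I_m)$, so the inductive bound $30p^2 w(I_m^{(2)})$ already nearly exhausts $30p^2 w(I_m)$ before the $\sim 2p^2 w(I_m)$ tail is added. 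The paper circumvents this by treating the $m=0$ case \emph{directly} rather than inductively: it sums the contributions of \emph{all} nested tail intervals $E_j$ in $I$ at once, using $|E_j|\le 2^{-j}3^{-k}|I|$ and $n=3^{k-1}$ to get the constant $24$ for $m=0$. That slack ($24$ instead of $30$) is exactly what leaves room for the tail when one then passes to general $m$ via $I_m=\bigcup_{j\ge m}(I_j\setminus I_{j+1})\cup I_{k-1}$.
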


\begin{proof} First, notice that the case when $l=n$ is trivial, since $2^{n+1}\le 4pw(x)$ on any interval $I$ carrying $w_0$,
and hence,
\begin{equation}\label{w0}
\int_{J}(\widetilde w)^2\si\le 16p^2w(J)\quad\text{for every }J\subset \operatorname{supp}w_0\,.
\end{equation}

Suppose now that $l\le n-1$ and consider first the case $m=0$. Assume that $I$ carries $w_{n-l}$.
For $j=0,\dots,n-l-1$ denote
$$F_j=I\cap\big(\text{supp}\,w_{n-(l+j)}\setminus \text{supp}\,w_{n-(l+j+1)}\big)$$
and let $E_j$ be the union of the tail intervals contained in~$F_j$.
Observe that $w=\frac{2^{l+j}}{p}$ on $F_j\setminus E_j$, and hence,
$\widetilde w(x)=2pw(x)$ for $x\in F_j\setminus E_j,$ which implies
\begin{equation}\label{part1}
\int_{\cup_j(F_j\setminus E_j)}(\widetilde w)^2\si\le 4p^2w(I).
\end{equation}

On the other hand, $w=\frac{4\e}{1+\e}\frac{2^{l+j}}{p}$ on $E_j$ and, as we have seen in the previous section,
$$|E_j|=\Big(\frac{1}{2}\Big(1-\frac{1}{3^{k-1}}\Big)\Big)^j\frac{1}{3^k}|I|\le \frac{1}{2^j}\frac{1}{3^k}|I|.$$
Combining this with Proposition \ref{prop1} yields
\begin{eqnarray}
\int_{\cup_jE_j}(\widetilde w)^2\si&\le& 4\sum_{j=0}^{n-l-1}2^{2(l+j)}\frac{(1+\e)p}{4\e 2^{l+j}}\frac{1}{2^j}\frac{1}{3^k}|I|\nonumber\\
&\le& \frac{2p}{\e}\frac{n}{3^k}2^l|I|\le 4p^22^l|I|=4p^2w(I).\label{part2}
\end{eqnarray}

Further, by (\ref{w0}),
$$
\int_{I\cap\text{supp}\,w_0}(\widetilde w)^2\si\le 16p^2w(I\cap\text{supp}\,w_0)\le 16p^2w(I).
$$
Combining this estimate with (\ref{part1}) and (\ref{part2}), we obtain
\begin{eqnarray}
\int_{I}(\widetilde w)^2\si&=&\int_{\cup_j(F_j\setminus E_j)}(\widetilde w)^2\si+\int_{\cup_jE_j}(\widetilde w)^2\si\label{m=0}\\
&+&\int_{I\cap\text{supp}\,w_0}(\widetilde w)^2\si\le 24p^2w(I),\nonumber
\end{eqnarray}
and this completes the proof in the case $m=0$.

Assume now that $1\le m\le k-2$. Notice that $I_m\setminus I_{m+1}=I_m^{(1)}\cup I_m^{(2)}$, where
$I_m^{(2)}$ carries $w_{n-(l+1)}$, and
$\widetilde w(x)=2pw(x)$ on $I_m^{(1)}$.
Thus, by (\ref{m=0}),
\begin{eqnarray}
\int_{I_m\setminus I_{m+1}}(\widetilde w)^2\si&\le& 4p^2w(I_m^{(1)})+24p^2w(I_m^{(2)})\label{it}\\
&\le& 24p^2w(I_m\setminus I_{m+1}).\nonumber
\end{eqnarray}

Further,
$$
\int_{I_{k-1}}(\widetilde w)^2\si\le 4(2^l)^2\frac{(1+\e)p}{4\e 2^l}|I_{k-1}|\le 6p2^l|I|.
$$
On the other hand, by Proposition \ref{prop1},
$$
w(I_m)\ge w(I_m^{(2)})=2^{l+1}|I_m^{(2)}|=\frac{2^{l+1}}{3^{m+1}}|I|,
$$
which, combined with the previous estimate, implies
$$
\int_{I_{k-1}}(\widetilde w)^2\si\le p3^{m+2}w(I_m)\le \frac{p}{\e}w(I_m)\le 6p^2w(I_m).
$$
Therefore, using (\ref{it}), we obtain
$$
\int_{I_m}(\widetilde w)^2\si= \sum_{j=m}^{k-2}\int_{I_j\setminus I_{j+1}}(\widetilde w)^2\si+\int_{I_{k-1}}(\widetilde w)^2\si\le 30p^2w(I_m),
$$
which completes the proof.
\end{proof}

We now turn to the proof of (\ref{sufshow}). Let $J\in {\mathcal J}$.
First consider the simple case when $|J|\ge 1$. In this case, $|J|=k$ for some $k\in {\mathbb N}$. Using that $w$ and $\widetilde w$ are $1$-periodic
along with the fact that $\int_0^1w=1$, and combining Propositions \ref{max} and \ref{partcase}, we obtain
$$
\frac{1}{w(J)}\int_J(M(w\chi_J))^2\si \le 25\int_0^1(\widetilde w)^2\si\le 25\cdot 30p^2.
$$

Suppose that $|J|< 1$. We can represent $J$ as the union of two triadic intervals $J=J_-\cup J_+$, where
$J_-,J_+\in {\mathcal T}$ are the left and the right halves of $J$ respectively. Since $J_-$ is triadic,
we must have that $|J_-|\le \frac{1}{3}$.
Also, by the $1$-periodicity of~$w$, one can assume that $J_{-}\subset [0,1)$.

Consider the case when $J$ contains an interval carrying $\text{supp}\,w_{n-(l+1)}$ for some $l$. Out of
all such intervals choose the longest one. Note that
since $|J|\le\frac 23$, we must have $l\ge 0$ in this case. Thus, the interval in question
must be of the kind $R^{(2)}_m$ where $R$ is an interval carrying $w_{n-l}$. Since neither $R=R_0$,
nor $R^{(2)}_{m-1}$ (if $m\ge 1$) is contained in $J$, there are only three possible options:

\begin{itemize}
\item $J_-=R_m^{(2)}$, $0\le m\le k-2$;
\item $J_+=R_m^{(2)}$, $0\le m\le k-2$;
\item $J_-=R_m$, $1\le m\le k-2$.
\end{itemize}

Suppose first that $J_-=R_m^{(2)}$ or $J_+=R_m^{(2)}$. Then $J\subset R_m$. By (\ref{intest}), $w(R_m)\le 3\cdot 2^{l-1}|R_m|$. On the other hand, since $R_m^{(2)}$ carries $w_{n-(l+1)}$,
by Proposition \ref{prop1},
\begin{equation}\label{someest}
w(J)\ge w(R_m^{(2)})=2^{l+1}|R_m^{(2)}|=2^{l+1}\frac{|R_m|}{3},
\end{equation}
which implies $w(R_m)\le \frac{9}{4}w(J)$. Therefore, by Propositions \ref{max} and~\ref{partcase},
$$\int_{J}(M(w\chi_{J}))^2\si\le 25\int_{R_m}(\widetilde w)^2\si\le 25\cdot 30p^2w(R_m)\le 25\cdot 75p^2w(J).$$

Assume now that $J_-=R_m, 1\le m\le k-2$. Then $w\equiv \frac{2^{l-1}}{p}$ on $J_+$ if $l>0$ and
$w\equiv \frac{2^{l}}{p}$ on $J_+$ if $l=0$. In either case, $\widetilde w=2pw$ on $J_+$, so
$$\int_{J_+}(\widetilde w)^2\si = 4p^2 w(J_+),$$
and thus, by Propositions \ref{max} and~\ref{partcase},
\begin{eqnarray*}
\int_{J}(M(w\chi_{J}))^2\si&\le& 25\Big(\int_{R_m}(\widetilde w)^2\si+\int_{J_+}(\widetilde w)^2\si\Big)\\
&\le& 25(30p^2w(R_m)+4p^2w(J_+))\le 25\cdot 30 p^2w(J).
\end{eqnarray*}

It remains to consider the case when $J$ does not contain an interval carrying $w_{n-(l+1)}$ for any $0\le l\le n-1$.
Denote by $E$ the union of all tail intervals appearing in the definition of $w$. Notice that if $x\not\in E$, then
$$
\sum_{l=1}^n2^l\chi_{\text{supp}\,w_{n-(l-1)}\setminus\text{supp}\,w_{n-l}}(x)=2pw(x)\chi_{{\mathbb R}\setminus \text{supp}\,w_0}.
$$
Also, $2^{n+1}\le 4pw(x)\chi_{\text{supp}\,w_0}$. From this and from Proposition \ref{max},
$$Mw(x)\le 18pw(x)\quad(x\not\in E)\,.$$
Therefore, if $J\cap E=\varnothing$,
\begin{equation}\label{notE}
\frac{1}{w(J)}\int_{J}(M(w\chi_{J}))^2\si\le 18^2p^2.
\end{equation}

Suppose that $J\cap E\ne\varnothing$. Then there exists $R$ carrying $w_{n-l}$ for some $0\le l\le n-1$ such that $J\cap R_{k-1}^{(3)}\ne\varnothing$.
Since $J_-,J_+$ and $R_{k-1}^{(3)}$
are triadic, we have that either one half of $J$ is contained in $R_{k-1}^{(3)}$ or $R_{k-1}^{(3)}\subset J$. Since $J$ cannot contain any interval carrying $\text{supp}\,w_{n-(l+1)}$, in both cases we obtain that
$w$ can take only three possible values
$$\frac{2^l}{p},\frac{4\e 2^l}{(1+\e)p},\frac{2^{l-1}}{p}$$
on $J$
and therefore,
$$
\frac{1}{w(J)}\int_{J}(M(w\chi_{J}))^2\si\le \big(\sup_{J}w/\inf_{J}w\big)^2\le
\Big(\frac{1+\e}{4\e}\Big)^2\le 9p^2,
$$
which along with (\ref{notE}) implies
$$
\frac{1}{w(J)}\int_{J}(M(w\chi_{J}))^2\si\le 18^2 p^2.
$$
This completes the proof of (\ref{sufshow}), and therefore the first estimate in (\ref{mainpr}) is proved.

\subsection{Estimate of the Hilbert transform}
The goal of this section is to prove the second estimate in (\ref{mainpr}).

Denote by $A_l^*,l=0,\dots,n-1,$ the union of all intervals $\frac{1}{2}I$ where $I$ is a tail interval
contained in
$$[0,1)\cap\big(\text{supp}\,w_{n-l}\setminus \text{supp}\,w_{n-(l+1)}\big).$$
In other words, $A_l^*$ is the union of all intervals $\frac 12 J_{k-1}^{(3)}$ where $J\subset [0,1)$ carries $w_{n-l}$.
Then, by (\ref{Al}),
\begin{equation}\label{Al*}
|A_l^*|=\frac{1}{2}|A_l|=\frac{1}{2}\left(\frac{1}{2}\Big(1-\frac{1}{3^{k-1}}\Big)\right)^l\frac{1}{3^k}\quad(l=0,\dots,n-1).
\end{equation}
The sets $A_l^*$ plays the central role in establishing the lower bound for $H(w\chi_{[0,1)})$, as the following proposition shows.

\begin{prop}\label{hilbest}
There exists an absolute $C>0$ such that for for all $l=0,\dots,n-1$ and for every $x\in A_l^*$
\begin{equation}\label{below}
|H(w\chi_{[0,1)})(x)|\ge Ck2^l.
\end{equation}
\end{prop}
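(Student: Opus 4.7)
My plan is to write $H(w\chi_{[0,1)})(x)=\int_0^1\frac{w(y)}{x-y}\,dy$ as a principal value integral and split it relative to the interval $I$ (carrying $w_{n-l}$) whose tail interval $I_{k-1}^{(3)}$ contains $x$ in its left half. The dominant positive contribution should come from the ``middle thirds'' $I_m^{(2)}$ at the $k-1$ scales $m=0,\ldots,k-2$ inside $I$; every other piece is either positive (and discarded as bonus) or gives a negligible negative contribution of size $O(2^l)$.

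Since $x$ lies within $\e|I|$ of the right endpoint of every $I_m$, the interval $I_m^{(2)}$ sits entirely to the left of $x$ at distance in the range $[h/3^{m+1}-\e h,\,2h/3^{m+1}]$, where $h=|I|$. Combining Proposition~\ref{prop1} (which gives $\int_{I_m^{(2)}}w=2^{l+1}|I_m^{(2)}|=2^{l+1}h/3^{m+1}$) with the pointwise bound $1/(x-y)\ge 3^{m+1}/(2h)$ on $I_m^{(2)}$ valid for $m\le k-2$, one obtains $\int_{I_m^{(2)}}\frac{w(y)}{x-y}\,dy\ge 2^l$, and summing over $m$ yields a positive lower bound of $(k-1)\,2^l$. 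Within $I$, the remaining pieces are essentially harmless: the flat blocks $I_m^{(1)}$ (on which $w=2^l/p$) each contribute a positive term of size $O(2^l/p)=O(2^l\e)$, and the combination of $I_{k-1}^{(1)}\cup I_{k-1}^{(2)}$ with the PV over the tail $I_{k-1}^{(3)}$ is nonnegative, because the factor $4\e/(1+\e)\ll 1$ on the tail makes its logarithmic singularity strictly smaller than that of the flat buffer to its left.

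For the contribution from outside $I$, let $I=I^{(l)}\subset\cdots\subset I^{(0)}=[0,1)$ be the chain of ancestors with $I^{(j+1)}=(I^{(j)})_{m_{j+1}}^{(2)}$. In each annulus $I^{(j)}\setminus I^{(j+1)}$, everything to the left of $I^{(j+1)}$ lies left of $x$ and contributes positively; the only potentially harmful piece is the right sibling $T_j:=(I^{(j)})_{m_{j+1}+1}$, which is a suffix of $I^{(j)}$ of length $|I^{(j+1)}|$. Proposition~\ref{prop2} applied to $I^{(j)}$ gives $\int_{T_j}w\le\tfrac{9}{2}\,2^j|I^{(j+1)}|$. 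The leftmost third of $T_j$ (the part closest to $x$) carries the small constant value $2^j/p$, so its contribution is at most $\tfrac{2^j}{p}\log(|I^{(j+1)}|/\mathrm{dist}(x,T_j))\lesssim 2^j k\e=o(2^j)$, while the remaining two-thirds sit at distance $\ge|I^{(j+1)}|/3$ from $x$ and so contribute at most $\tfrac{3}{|I^{(j+1)}|}\cdot\tfrac{9}{2}\,2^j|I^{(j+1)}|=O(2^j)$. Summing over $j=0,\ldots,l-1$ gives total negative contribution $O(2^l)$, and therefore $H(w\chi_{[0,1)})(x)\ge (k-1)\,2^l-O(2^l)\ge Ck\,2^l$ once $k$ is large (which we arrange by taking $C_3$ large enough).

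The truly delicate step is the nearest right sibling $T_{l-1}$, where $x$ lies at distance only $\ge\e h/2$ from the left endpoint of $T_{l-1}$. A crude bound on $\int_{T_{l-1}}w/(y-x)$ would pick up a logarithmic factor $\log(h/(\e h))\sim k$ against an average weight of order $2^{l-1}$, producing a spurious $-Ck\,2^{l-1}$ that would cancel the main term. The saving is entirely structural: the leftmost third of $T_{l-1}$ is a flat low-value piece with $w=2^{l-1}/p\sim 2^{l-1}\e$, so the dangerous $\log(1/\e)\sim k$ is multiplied by $\e$ and becomes harmless. This ``flat buffer of length $|I|/3$ adjacent to each potentially dense block'' is precisely the structural feature that allows the construction to be sharp for the Hilbert transform while keeping the $A_2$ constant of $w$ comparable to $t$.
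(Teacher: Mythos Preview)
Your argument is correct and follows essentially the same route as the paper's proof. Both decompose $H(w\chi_{[0,1)})(x)$ relative to the interval $I$ carrying $w_{n-l}$, extract the main term $\gtrsim k\,2^l$ from the $k-1$ middle thirds $I_m^{(2)}$, discard all left-of-$x$ pieces as positive, and control the contribution from the right of $I$ via the chain of ancestors together with the ``flat buffer'' observation that the leftmost third of each right sibling carries the small value $2^j/p$. Your handling of $I_{k-1}$ (showing $C+D\ge 0$ rather than bounding $|D|$ separately) and your slightly sharper constant in the main term are cosmetic variations, not a different method.
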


Let us first show how to derive the second estimate in (\ref{mainpr}) from here. By (\ref{Al*}) and (\ref{below}),
$$
\int_{A_l^*}|H(w\chi_{[0,1)})|^2\si\ge C^2k^22^{2l}\frac{1+\e}{4\e}\frac{p}{2^l}\frac{1}{2}\left(\frac{1}{2}\Big(1-\frac{1}{3^{k-1}}\Big)\right)^l\frac{1}{3^k}.
$$
Therefore,
\begin{eqnarray*}
&&\|H(w\chi_{[0,1)})\|^2_{L^2(\si)}\ge \sum_{l=0}^{n-1}\int_{A_l^*}|H(w\chi_{[0,1)})|^2\si\\
&&\ge \frac{C^2}{8}k^2p\sum_{l=0}^{n-1}\Big(1-\frac{1}{3^{k-1}}\Big)^l=\frac{C^2}{24}k^23^kp\left(1-\Big(1-\frac{1}{3^{k-1}}\Big)^n\right).
\end{eqnarray*}
Since  $n=3^{k-1}$ and $(1-1/n)^{n}<1/{\rm{e}}$, we obtain
$$\|H(w\chi_{[0,1)})\|^2_{L^2(\si)}\ge\frac{C^2(1-1/{\rm e})}{24}k^23^kp
\ge \frac{C^2(1-1/{\rm e})}{144(\log 3)^2}t^2\log^2t.$$

Let us now turn to the proof of Proposition \ref{hilbest}. Let $J=[a,b)\subset[0,1)$ be an interval carrying
$w_{n-l}$. Assume that $x\in \frac{1}{2}J_{k-1}^{(3)}$.
Write
\begin{eqnarray*}
&&H(w\chi_{[0,1)})(x)=H(w\chi_{[0,a)})(x)+\sum_{m=0}^{k-2}H(w\chi_{J_m\setminus J_{m+1}})(x)\\
&&+H(w\chi_{J_{k-1}\setminus J_{k-1}^{(3)}})(x)+H(w\chi_{J_{k-1}^{(3)}})(x)+H(w\chi_{[b,1)})(x)\\
&&\equiv A(x)+B(x)+C(x)+D(x)+E(x).
\end{eqnarray*}

We will show that there are
absolute constants $C_1$ and $C_2$ such that for all $x\in \frac{1}{2}J_{k-1}^{(3)}$,
\begin{equation}\label{both}
|B(x)|\ge C_1k2^l\quad\text{and}\quad\max\{|D(x)|,|E(x)|\}\le C_22^l.
\end{equation}
Since $A(x),B(x)$ and $C(x)$ are positive for all $x\in \frac{1}{2}J_{k-1}^{(3)}$, we obtain from (\ref{both}) that
\begin{eqnarray*}
|H(w\chi_{[0,1)})(x)|&\ge& |A(x)+B(x)+C(x)|-|D(x)|-|E(x)|\\
&\ge& |B(x)|-|D(x)|-|E(x)|\ge \frac{C_1}{2}k2^l
\end{eqnarray*}
for $k>\frac{4C_2}{C_1}$.

Now let us prove the first estimate in (\ref{both}). If $y\in J_m\setminus J_{m+1}$ and $x\in \frac{1}{2}J_{k-1}^{(3)}$, then
$0\le x-y\le |J_m|$. Using also that $J_m^{(2)}\subset J_m\setminus J_{m+1}$, by Proposition \ref{prop1} we obtain
\begin{eqnarray*}
H(w\chi_{J_m\setminus J_{m+1}})(x)&=&\int_{J_m\setminus J_{m+1}}\frac{w(y)}{x-y}dy\ge \frac{w(J_{m}\setminus J_{m+1})}{|J_m|}\\
&\ge&\frac{w(J_m^{(2)})}{|J_m|}=\frac{2}{3}2^l.
\end{eqnarray*}
Therefore,
$$B(x)>\frac{2}{3}(k-1)2^l.$$

Turn to the second part of (\ref{both}). Let $J_{k-1}^{(3)}=[\a,b)$. Then for all $x\in \frac{1}{2}J_{k-1}^{(3)}$,
$$
\left|\int_{J_{k-1}^{(3)}}\frac{w(y)}{x-y}dy\right|=\frac{4\e}{(1+\e)}\frac{2^l}{p}\left|\log\Big|\frac{x-\a}{x-b}\Big|\right|\le 4(\log 3)\e\frac{2^l}{p}\le 4(\log 3)2^l.
$$

It remains to estimate $|E(x)|$. Take the intervals $J^i=[a_i,b_i),i=0,\dots, l$ such that $J^i$ carries $w_{n-l+i}$ and
$$J=J^0\subset J^1\subset\dots\subset J^l=[0,1).$$

We claim that for every $i=1,\dots,l$ and for all $x$ such that $0<x\le b_{i-1}-\frac{|J^{i-1}|}{4\cdot 3^{k}}$,
\begin{equation}\label{claim}
|H(w\chi_{[b_{i-1},b_i)})(x)|\le 13\cdot 2^{l-i}.
\end{equation}
Notice first that this claim immediately implies the desired estimate for $E(x)$. Indeed, let $x\in \frac{1}{2}J_{k-1}^{(3)}$. Then $0<x\le b-\frac{|J|}{4\cdot 3^{k}}$,
and hence (\ref{claim}) holds for $i=1$. But since $x\not\in (J^i)_{k-1}$ for all $i=1,\dots, l$, we obviously obtain that $0<x\le b_{i-1}-\frac{|J^{i-1}|}{4\cdot 3^{k}}$
for all $i\le l$. Therefore, by (\ref{claim}),
$$
|H(w\chi_{[b,1)})(x)|\le \sum_{i=1}^l|H(w\chi_{[b_{i-1},b_i)})(x)|\le 13\sum_{i=1}^l2^{l-i}\le 13\cdot 2^l.
$$

It remains to prove the claim. Denote $x_i=b_{i-1}-\frac{|J^{i-1}|}{4\cdot 3^{k}}$. Observe that
$|H(w\chi_{[b_{i-1},b_i)})(x)|$ is an increasing function for $x<b_{i-1}$. Therefore, it suffices to prove that
\begin{equation}\label{sufic}
|H(w\chi_{[b_{i-1},b_i)})(x_i)|\le 13\cdot 2^{l-i}.
\end{equation}

There exists $0\le m\le k-2$ such that $J^{i-1}=(J_m^i)^{(2)}$. Then $[b_{i-1}, b_i)=J^i_{m+1}$. Let $h=|J^i_{m+1}|$.
Split the integral in (\ref{sufic}) as follows:
$$
\int_{b_{i-1}}^{b_i}\frac{w(y)}{y-x_i}dy=\int_{b_{i-1}}^{b_{i-1}+h/3}\frac{w(y)}{y-x_i}dy+\int_{b_{i-1}+h/3}^{b_i}\frac{w(y)}{y-x_i}dy.
$$
Using that $w\equiv \frac{2^{l-i}}{p}$ on $[b_{i-1}, b_{i-1}+h/3)$, we obtain
$$
\int_{b_{i-1}}^{b_{i-1}+h/3}\frac{w(y)}{y-x_i}dy\le \frac{2^{l-i}}{p}\frac{h}{3}\frac{4\cdot 3^{k}}{h}\le 8\cdot 2^{l-i}.
$$

Next, applying (\ref{intest}) yields
$$
\int_{b_{i-1}+h/3}^{b_i}\frac{w(y)}{y-x_i}dy\le \frac{3}{h}\frac{3}{2}2^{l-i}|J_{m+1}^i|=\frac{9}{2}\cdot 2^{l-j},
$$
which along with the previous estimate proves (\ref{sufic}).

This proves the claim and so Proposition \ref{hilbest}. Thus, Theorem \ref{main} is completely proved.

\section{Appendix}
In this section, we will show how to prove (\ref{keytool}).
Let us show first that for every interval $I\subset {\mathbb R}$, there exists an interval $J\in {\mathcal J}$ containing $I$ and such that $|J|\le 6|I|$.
Indeed, let $I=[a,a+h)$. Fix $j\in {\mathbb Z}$ such that $3^{j-1}\le h<3^j$, and take $n\in {\mathbb Z}$ such that
$$3^jn\le a<3^j(n+1).$$
Then $I\subset J=[3^jn,3^j(n+2))$,
and $\frac{|J|}{|I|}\le \frac{2\cdot 3^{j}}{3^{j-1}}=6$.

It follows immediately from this property that
\begin{equation}\label{triadic}
Mf(x)\le 6M^{\mathcal J}f(x),
\end{equation}
where
$$M^{\mathcal J}f(x)=\sup_{J\ni x, J\in {\mathcal J}}\frac{1}{|J|}\int_J|f|dy.$$

Next, it is easy to see that the intervals from ${\mathcal J}$ can be split into two disjoint triadic lattices, ${\mathcal J}={\mathcal T}^{1}\cup{\mathcal T}^2$
(see Fig. \ref{twotriadic} for a geometric illustration of this fact).

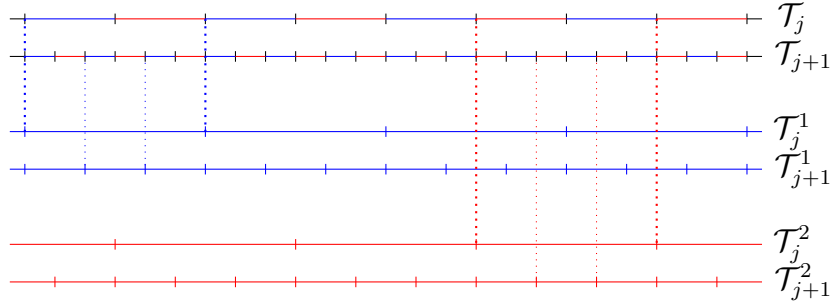
\begin{figure}[h]
\begin{centering}
\begin{tikzpicture}

\draw (10.4,0) node {${\mathcal T}_j$};
\draw(0,0)--(0.2,0);
\draw(9.8,0)--(10,0);
\foreach \x/\xtext in {0.2/$$,1.4/$$,2.6/$$,3.8/$$,5/$$,6.2/$$,7.4/$$,8.6/$$,9.8/$$}
   \draw(\x,2pt)--(\x,-2pt) node[below] {\xtext};

\draw[blue] (0.2,0) -- (1.4,0);
\draw[red]  (1.4,0) -- (2.6,0);
\draw[blue] (2.6,0) -- (3.8,0);
\draw[red]  (3.8,0) -- (5,0);
\draw[blue] (5,0) -- (6.2,0);
\draw[red]  (6.2,0) -- (7.4,0);
\draw[blue] (7.4,0) -- (8.6,0);
\draw[red]  (8.6,0) -- (9.8,0);

\begin{scope}[yshift=-0.5cm]
\draw (10.55, 0) node {${\mathcal T}_{j+1}$};
\draw(0,0)--(0.2,0);
\draw(9.8,0)--(10,0);
\foreach \x/\xtext in {0.2/$$,0.6/$$,1/$$,1.4/$$,1.8/$$,2.2/$$,2.6/$$,3/$$,3.4/$$,3.8/$$,4.2/$$,4.6/$$,5/$$,5.4/$$,5.8/$$,6.2/$$,6.6/$$,7/$$,7.4/$$,7.8/$$,8.2/$$,8.6/$$,9/$$,9.4/$$,9.8/$$}
   \draw(\x,2pt)--(\x,-2pt) node[below] {\xtext};

\draw[blue] (0.2,0) -- (0.6,0);
\draw[red]  (0.6,0) -- (1,0);
\draw[blue] (1,0) -- (1.4,0);
\draw[red]   (1.4,0) -- (1.8,0);
\draw[blue] (1.8,0) -- (2.2,0);
\draw[red] (2.2,0) -- (2.6,0);
\draw[blue] (2.6,0) -- (3,0);
\draw[red] (3,0) -- (3.4,0);
\draw[blue] (3.4,0) -- (3.8,0);
\draw[red] (3.8,0) -- (4.2,0);
\draw[blue] (4.2,0) -- (4.6,0);
\draw[red] (4.6,0) -- (5,0);
\draw[blue] (5,0) -- (5.4,0);
\draw[red] (5.4,0) -- (5.8,0);
\draw[blue] (5.8,0) -- (6.2,0);
\draw[red] (6.2,0) -- (6.6,0);
\draw[blue] (6.6,0) -- (7,0);
\draw[red]  (7,0) -- (7.4,0);
\draw[blue] (7.4,0) -- (7.8,0);
\draw[red] (7.8,0) -- (8.2,0);
\draw[blue] (8.2,0) -- (8.6,0);
\draw[red] (8.6,0) -- (9,0);
\draw[blue] (9,0) -- (9.4,0);
\draw[red] (9.4,0) -- (9.8,0);
\end{scope}

\begin{scope}[yshift=-1.5cm]
\draw (10.4,0) node {${\mathcal T}^1_j$};

\draw[blue](0,0)--(10,0);
\foreach \x/\xtext in {0.2/$$,2.6/$$,5/$$,7.4/$$,9.8/$$}
   \draw[blue](\x,2pt)--(\x,-2pt) node[below] {\xtext};
\end{scope}

\begin{scope}[yshift=-2cm]
\draw (10.55, 0) node {${\mathcal T}^1_{j+1}$};
\draw[blue](0,0)--(10,0);
\foreach \x/\xtext in {0.2/$$,1/$$,1.8/$$,2.6/$$,3.4/$$,4.2/$$,5/$$,5.8/$$,6.6/$$,7.4/$$,8.2/$$,9/$$,9.8/$$}
   \draw[blue](\x,2pt)--(\x,-2pt) node[below] {\xtext};
\end{scope}

\begin{scope}[yshift=-3cm]
\draw (10.4,0) node {${\mathcal T}^2_j$};
\draw[red](0,0)--(10,0);
\foreach \x/\xtext in {1.4/$$,3.8/$$,6.2/$$,8.6/$$}
   \draw[red](\x,2pt)--(\x,-2pt) node[below] {\xtext};
\end{scope}

\begin{scope}[yshift=-3.5cm]
\draw (10.55, 0) node {${\mathcal T}^2_{j+1}$};
\draw[red](0,0)--(10,0);
\foreach \x/\xtext in {0.6/$$,1.4/$$,2.2/$$,3/$$,3.8/$$,4.6/$$,5.4/$$,6.2/$$,7/$$,7.8/$$,8.6/$$,9.4/$$}
   \draw[red](\x,2pt)--(\x,-2pt) node[below] {\xtext};
\end{scope}

\draw[blue,thick,dotted] (0.2,0)--(0.2,-1.5);
\draw[blue,thick,dotted] (2.6,0)--(2.6,-1.5);
\draw[blue,dotted] (1,-0.5)--(1,-2);
\draw[blue,dotted] (1.8,-0.5)--(1.8,-2);
\draw[red,thick,dotted] (6.2,0)--(6.2,-3);
\draw[red,thick,dotted] (8.6,0)--(8.6,-3);
\draw[red,dotted] (7,-0.5)--(7,-3.5);
\draw[red,dotted] (7.8,-0.5)--(7.8,-3.5);

\end{tikzpicture}
\caption{The lattices ${\mathcal T}, {\mathcal T}^1$ and ${\mathcal T}^2$ shown at two consecutive generations.
The unions of blue and adjacent (from the right) red intervals from ${\mathcal T}_j$ form ${\mathcal T}_j^1$,
and the unions of red and adjacent (from the right) blue intervals from ${\mathcal T}_j$ form ${\mathcal T}_j^2$.
In turn, the unions of blue and red children from ${\mathcal T}_{j+1}$ form ${\mathcal T}_{j+1}^1$, and the unions
of red and blue children from ${\mathcal T}_{j+1}$ form ${\mathcal T}_{j+1}^2$.}
\label{twotriadic}
\end{centering}
\end{figure}

Therefore, by (\ref{triadic}),
\begin{equation}\label{tr1}
\|M\|_{L^2(\si)\to L^2(\si)}\le 6\Big(\|M^{{\mathcal T}^1}\|_{L^2(\si)\to L^2(\si)}+\|M^{{\mathcal T}^2}\|_{L^2(\si)\to L^2(\si)}\Big).
\end{equation}

In order to estimate the right-hand side of (\ref{tr1}), we invoke the following proposition.

\begin{prop}\label{sawyer} Let ${\mathfrak T}$ be a triadic lattice. Then
$$\|M^{\mathfrak T}\|_{L^2(\si)\to L^2(\si)}\le 2\sup_{R\in {\mathfrak T}}\left(\frac{1}{w(R)}\int_R(M^{\mathfrak T}(w\chi_R))^2\si\right)^{1/2}.$$
\end{prop}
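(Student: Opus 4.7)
The plan is to prove this as a Sawyer-type $L^2$ testing estimate, combining a Calder\'on--Zygmund stopping-time decomposition of $f$ with the triadic Carleson embedding theorem. Fix $f\ge 0$ in $L^2(\sigma)$ (compactly supported, by density) and a parameter $\alpha>1$, to be sent to $1$ at the end. For each $k\in\mathbb Z$, let $\mathcal P_k$ be the collection of maximal triadic intervals $P$ with $\langle f\rangle_P:=\frac{1}{|P|}\int_Pf>\alpha^k$, and set $\mathcal P:=\bigcup_k\mathcal P_k$. For $P\in\mathcal P_k$, define $E(P):=P\setminus\bigcup\{P'\in\mathcal P_{k+1}:P'\subset P\}$; the sets $E(P)$ are pairwise disjoint and their union is $\{M^{\mathfrak T}f>0\}$. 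On $E(P)$ (with $P\in\mathcal P_k$), the inequality $M^{\mathfrak T}f(x)\le \alpha^{k+1}<\alpha\langle f\rangle_P$ holds, and therefore
\[
\int(M^{\mathfrak T}f)^2\,\sigma\le \alpha^2\sum_{P\in\mathcal P}\langle f\rangle_P^2\,\sigma(E(P)).
\]

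The key step is to recast this sum as an instance of the Carleson embedding theorem. Factor $\langle f\rangle_P=\frac{w(P)}{|P|}\langle f/w\rangle_P^w$, where $\langle h\rangle_P^w:=\frac{1}{w(P)}\int_Ph\,w$ is the $w$-average of $h$ over $P$, and set $a_P:=\frac{w(P)^2\sigma(E(P))}{|P|^2}$; then the right-hand side of the previous display equals $\alpha^2\sum_{P\in\mathcal P}a_P\bigl(\langle f/w\rangle_P^w\bigr)^2$. The Carleson condition for $\{a_P\}$ against the measure $w$ follows immediately from the testing hypothesis: for every $R\in\mathfrak T$ and $P\in\mathcal P$ with $P\subset R$, one has $M^{\mathfrak T}(w\chi_R)\ge w(P)/|P|$ on $P\supset E(P)$, while the $E(P)$'s with $P\subset R$ are pairwise disjoint subsets of $R$, so
\[
\sum_{P\in\mathcal P,\,P\subset R}a_P\le \int_R\bigl(M^{\mathfrak T}(w\chi_R)\bigr)^2\sigma\le T^2 w(R),
\]
where $T$ denotes the testing supremum appearing on the right-hand side of the proposition.

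The triadic Carleson embedding theorem---whose standard layer-cake proof reduces it to the $L^2(w)$-boundedness of the $w$-weighted triadic maximal function with sharp constant $2$ from Doob's inequality---now yields
\[
\sum_{P\in\mathcal P}a_P\bigl(\langle f/w\rangle_P^w\bigr)^2\le 4T^2\int(f/w)^2\,w=4T^2\|f\|_{L^2(\sigma)}^2.
\]
Combining with the first display, $\|M^{\mathfrak T}f\|_{L^2(\sigma)}^2\le 4\alpha^2 T^2\|f\|_{L^2(\sigma)}^2$, and sending $\alpha\downarrow 1$ gives $\|M^{\mathfrak T}\|_{L^2(\sigma)\to L^2(\sigma)}\le 2T$, as claimed. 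The main obstacle is bookkeeping of constants rather than any new idea: every ingredient---Calder\'on--Zygmund stopping, Carleson embedding, Doob's inequality---is entirely classical, but one must use $\alpha$ close to $1$ together with the sharp constant $4$ in the Carleson embedding in order to land at $2$ and not at some larger absolute constant.
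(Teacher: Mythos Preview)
Your proof is correct and follows essentially the same route as the paper: a Calder\'on--Zygmund stopping decomposition at levels $\alpha^k$, the rewriting $\langle f\rangle_P=\tfrac{w(P)}{|P|}\langle f\sigma\rangle_P^{w}$, verification of the Carleson condition for the coefficients $a_P=\big(\tfrac{w(P)}{|P|}\big)^2\sigma(E(P))$ via the testing quantity, and the Carleson embedding bound $4T^2$ (which the paper proves in place through the $L^2(w)$ bound $\|M_w^{\mathfrak T}\|\le 2$, i.e.\ Doob's inequality). The only cosmetic difference is that you state the limit $\alpha\downarrow 1$ explicitly and index the stopping intervals as a single family $\mathcal P$ rather than by pairs $(j,k)$; since $E(P)=\varnothing$ for all but the largest $k$ with $P\in\mathcal P_k$, this causes no issue.
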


\begin{remark}
For dyadic lattices this result can be found in \cite{M}. The proof there is closely related to the approach by E. Sawyer \cite{S}
in his two-weighted characterization for the maximal operator. For triadic lattices the proof is essentially the same, and we give it
for the sake of completeness.
\end{remark}


\begin{proof}[Proof of Proposition \ref{sawyer}]
Let $a>1$. For $k\in {\mathbb Z}$ write the set $\O_k=\{M^{\mathfrak T}f>a^k\}$
as the union of pairwise disjoint triadic intervals $I_j^k$ such that $\frac{1}{|I_j^k|}\int_{I_j^k}|f|>a^k$.
Denote $E_j^k=I_j^k\setminus \O_{k+1}$, and set $\a_{j,k}=(w(I_j^k)/|I_j^k|)^2\si(E_j^k).$
We have
\begin{eqnarray}
\|M^{\mathfrak T}f\|_{L^2(\si)}^2&=&\sum_{k\in {\mathbb Z}}\int_{\Omega_k\setminus \Omega_{k+1}}(M^{\mathfrak T}f)^2\si\le a^2\sum_{k,j}\left(\frac{1}{|I_j^k|}\int_{I_j^k}|f|\right)^2\si(E_j^k)\nonumber\\
&=&a^2\sum_{k,j}\left(\frac{1}{w(I_j^k)}\int_{I_j^k}|f\si|w\right)^2\a_{j,k}\,.\label{firstpart}
\end{eqnarray}

Notice that for every $R\in {\mathfrak T}$,
\begin{equation}\label{carlprop}
\sum_{j,k:I_j^k\subset R}\a_{j,k}\le \int_R(M^{\mathfrak T}(w\chi_R))^2\si\le N^2w(R),
\end{equation}
where
$$N=\sup_{R\in {\mathfrak T}}\left(\frac{1}{w(R)}\int_J(M^{\mathfrak T}(w\chi_R))^2\si\right)^{1/2}.$$

For $\la>0$ set
$$E_{\la}=\left\{(j,k):\left(\frac{1}{w(I_j^k)}\int_{I_j^k}|f\si|w\right)^2>\la\right\}.$$

Define the weighted maximal operator $M_w^{\mathfrak T}$ by
$$
M_w^{\mathfrak T}f(x)=\sup_{J\ni x, J\in \mathfrak T}\frac 1{w(J)}\int_J |f|w\,dy\,.
$$
Writing the set $\{x:M_w^{\mathfrak T}(f\si)^2(x)>\la\}$ as the union of the maximal pairwise disjoint triadic intervals $\cup_iR_i$
and applying (\ref{carlprop}), we obtain
\begin{eqnarray*}
\sum_{(j,k)\in E_{\la}}\a_{j,k}\le \sum_i\sum_{j,k:I_j^k\subset R_i}\a_{j,k}\le N^2w\{x:M_w^{\mathfrak T}(f\si)^2(x)>\la\}.
\end{eqnarray*}
Therefore,
\begin{eqnarray*}
&&\sum_{k,j}\left(\frac{1}{w(I_j^k)}\int_{I_j^k}|f\si|w\right)^2\a_{j,k}=\int_0^{\infty}\Big(\sum_{(j,k)\in E_{\la}}\a_{j,k}\Big)d\lambda\\
&&\le N^2\|M_w^{\mathfrak T}(f\si)\|_{L^2(w)}^2\le 4N^2\|f\si\|_{L^2(w)}^2 = 4N^2\|f\|_{L^2(\si)}^2,
\end{eqnarray*}
which, along with (\ref{firstpart}), completes the proof.
\end{proof}


\begin{thebibliography}{99}

\bibitem{CLO}
M. Caldarelli, A.K. Lerner and S. Ombrosi,
{\it On a counterexample related to weighted weak type estimates for singular integrals}, Proc. Amer. Math. Soc.  {\bf 145} (2017),  no. 7, 3005--3012.

\bibitem{CS}
A. Criado and F. Soria, {\it Muckenhoupt-Wheeden conjectures in higher dimensions},
Studia Math. {\bf 233} (2016),  no. 1, 25--45.

\bibitem{FS}
C. Fefferman and E.M. Stein, {\it Some maximal inequalities}, Amer.
J. Math., {\bf 93} (1971), 107--115.

\bibitem{LOP1}
A.K. Lerner, S. Ombrosi and C. P\'erez,
{\it Sharp $A_1$ bounds for Calder\'on-Zygmund operators and the relationship with a problem of Muckenhoupt and Wheeden}, Int. Math. Res. Not. IMRN  2008,  no. 6, Art. ID rnm161, 11 pp.

\bibitem{LOP2}
A.K. Lerner, S. Ombrosi and C. P\'erez, {\it $A_1$ bounds for Calder\'on-Zygmund operators related to a problem of Muckenhoupt and Wheeden}, Math. Res. Lett. {\bf 16} (2009),  no. 1, 149--156.

\bibitem{M}
K. Moen, {\it Sharp one-weight and two-weight bounds for maximal
operators}, Studia Math. {\bf 194} (2009), no. 2, 163--180.

\bibitem{NRVV1}
F. Nazarov, A. Reznikov, V. Vasyunin and A. Volberg,
{\it A Bellman function counterexample to the $A_1$ conjecture: the blow-up of the weak norm estimates of weighted singular operators},
preprint (2015). Available at https://arxiv.org/abs/1506.04710

\bibitem{NRVV2}
F. Nazarov, A. Reznikov, V. Vasyunin and A. Volberg,
{\it On weak weighted estimates of martingale transform}, preprint (2016). Available at https://arxiv.org/abs/1612.03958

\bibitem{R}
M.C. Reguera, {\it On Muckenhoupt-Wheeden conjecture}, Adv. Math., {\bf 227} (2011), no. 4, 1436–-1450.

\bibitem{RT}
M.C. Reguera and C. Thiele, {\it The Hilbert transform does not map $L^1(Mw)$ to $L^{1,\infty}(w)$}, Math. Res. Lett. {\bf 19} (2012),  no. 1, 1--7.

\bibitem{S}
E.T. Sawyer, {\it A characterization of a two-weight norm inequality
for maximal operators}, Studia Math., {\bf 75} (1982), 1-11.




\end{thebibliography}
\end{document}